\numberwithin{equation}{section}
\newtheorem{theorem}{Theorem}[section]
\newtheorem{lemma}[theorem]{Lemma}
\newtheorem{proposition}[theorem]{Proposition}
\newtheorem{corollary}[theorem]{Corollary}
\theoremstyle{definition}
\newtheorem{definition}[theorem]{Definition}
\theoremstyle{remark}
\newtheorem{remark}[theorem]{Remark}
\newtheorem{example}[theorem]{Example}
\newtheorem{acknowledgement}{Acknowledgement}
\newcommand{\reg}{\operatorname{reg}}
\newcommand{\Hreg}{\operatorname{Hreg}}
\newcommand{\Supp}{\operatorname{supp}}
\newcommand{\Tor}{\operatorname{Tor}}
\newcommand{\fm}{\frak{m}}
\begin{document}
\footnotetext{Corresponding author}
\author[Winfried Bruns and Hero Saremi ]{Winfried Bruns and Hero Saremi$^{*}$ }
\title{Hilbert Regularity of Stanley-Reisner Rings}

\address{Universit\"{a}t Osnabruck, Ininstitue f\"{u}r mathematik, 49069 Osnabr\"{u}k, Germany. }  \email{wbruns@uos.de }
\address{Department of Mathematics, Sanandaj Brunch, Islamic Azad University, Sanandaj, Iran.} \email{hero.saremi@gmail.com}

\subjclass[2010]{13D40, 13F55.}

\keywords{Hilbert regularity, Simplicial complex, Stanley-Reisner ring}

\begin{abstract} 
In this note, we characterize the Hilbert regularity of the Stanley-Reisner ring $K[\bigtriangleup]$ in terms of the $f$-vector and the $h$-vector of a simplicial complex $\bigtriangleup$. We also compute the Hilbert regularity of a Gorenstein algebra.
\end{abstract}

\maketitle


\section{Introduction}
 
Let $K$ be a field and let $M$ be a finitely generated graded module over a standard graded $K$-algebra $R$. The {\it Costelnuovo-Mumford regularity} of $M$ is given by
\begin{equation*}
\reg (M)=\max\{i+j: H^i_{\fm}(M)_{j}\neq 0\},
\end{equation*}
where $\fm$ is the maximal ideal of $R$ generated by the homogeneous elements of positive degree. Let $R=K[X_{1},\cdots, X_d]$ with its standard grading. In this case a theorem of Eisenbud and Goto (see\cite{BH}, 4.3.1) states that
\begin{equation*}
\reg (M)=\max\{j-i: \Tor^R_{i}(K, M)_{j}\neq 0\},
\end{equation*}
where $K$ is naturally identified with $R/{\fm}$.

Bruns, Moyano and Uliczka, in \cite{BMU}, defined the $Hilbert\ regularity$ for a $\mathbb{Z}$-graded module $M$ over a polynomial ring as the least possible value of $\reg N$ for a module $N$ with $H_M(t)=H_N(t)$. Then they introduced $(n,k)${\it-boundary presentations} as the basic tool for the analysis of Hilbert series and the computation of Hilbert regularity. We recall their definition:

\begin{definition}\label{boundary}
Let $H_{M}(t)=\dfrac{Q(t)}{(1-t)^d}$ be the Hilbert series of $M$ with $d=\dim(M)$ and $Q(t)\in \mathbb{Z}[t]$. For integers $n$, $0\leq n\leq d$, and $k\geq 0$, an $(n, k)$-\emph{boundary presentation} of $H_M(t)$ is a decomposition of $H_M(t)$ in the form
\begin{equation*}
H_M(t)=\sum_{i=0}^{k-1}\dfrac{f_it^i}{(1-t)^n}+\dfrac{ct^k}{(1-t)^n}+\sum_{j=0}^{d-n-1}\dfrac{g_{j}t^k}{(1-t)^{d-j}}
\end{equation*} with $f_i, c,g_i \in \mathbb{Z}$. If $c=0$, the boundary presentation is called \emph{corner-free}, and if the coefficients $f_i, c,g_j$ are nonnegative, the boundary presentation is called \emph{nonnegative}.
\end{definition}

In the following we will simply speak of an $(n,k)$-\emph{presentation}. An $(n,k)$-presentation of  $H_{M}(t)={Q(t)}/{(1-t)^d}$ exists if and only if $k-n\geq \deg H=\deg Q-d$. If it exists, it is uniquely determined (see \cite{BMU}, Corollary 3.4).

In this paper we assume that modules $M$ are generated in nonnegative degrees. This is not a severe restriction since it can always be achieved by passing to a shifted version $M(-s)$ with $s\ge 0$.

In \cite[Cor. 4.2]{BMU} Bruns at el.\ characterized Hilbert regularity in terms of boundary presentations as follows:
\begin{theorem}
\begin{enumerate}
\item	$\Hreg H\ge k$ (if and) only if $H_M(t)$ admits a nonnegative $(0,k)$-boundary presentation. 
\item If $H_M(t)$ admits a non-corner-free $(0,k)$-boundary presentation, then $\Hreg H \geq k$.  
\end{enumerate}
\end{theorem}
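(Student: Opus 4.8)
The plan is to connect each term of a $(0,k)$-presentation with a concrete graded module, and conversely to read the presentation coefficients of an arbitrary module with the prescribed Hilbert series off its local cohomology. First I would record the dictionary of building blocks: the term $t^i/(1-t)^m$ is the Hilbert series of the Cohen--Macaulay module $K[X_1,\dots,X_m](-i)$, which is generated in degree $i$ and satisfies $\reg = i$. Hence a nonnegative $(0,k)$-presentation of $H_M$ produces a module $N=\bigoplus_{i<k}K(-i)^{f_i}\oplus K(-k)^{c}\oplus\bigoplus_{j}K[X_1,\dots,X_{d-j}](-k)^{g_j}$ with $H_N=H_M$, all of whose summands have regularity at most $k$; thus $\reg N\le k$, with $\reg N=k$ exactly when the corner coefficient $c$ or one of the tail coefficients $g_j$ is nonzero. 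This realization is the constructive half of the characterization.

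For the converse and for part (2) I would compute the coefficients of the (unique, by Corollary~3.4) $(0,k)$-presentation intrinsically from the Hilbert function. Comparing coefficients of $t^m$ gives $f_i=H_M(i)$ for $i<k$ (automatically nonnegative), exhibits the $g_j$ as the coefficients of the Hilbert polynomial $P_M$ rewritten in the binomial basis centred at $k$, and identifies the corner as the deviation $c=H_M(k)-P_M(k)$. By the Grothendieck--Serre formula, for \emph{any} module $N$ with $H_N=H_M$ one has $c=\sum_{i\ge 0}(-1)^i\dim_K H^i_{\fm}(N)_k$. If such an $N$ had $\reg N\le k-1$, then $H^i_{\fm}(N)_k=0$ for every $i$, forcing $c=0$. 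Contrapositively, a non-corner-free $(0,k)$-presentation ($c\ne 0$) admits no realization of regularity below $k$, i.e.\ $\Hreg H\ge k$; this is exactly (2), and it supplies the lower bound that makes the realization of the first paragraph sharp.

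Finally I would assemble (1). If $N$ realizes $H_M$ with $\reg N\le k$, the same local-cohomology reading shows $H^i_{\fm}(N)_k$ can be nonzero only for $i=0$, so $c=\dim_K H^0_{\fm}(N)_k\ge 0$; combined with $f_i\ge 0$ this reduces matters to the nonnegativity of the tail coefficients $g_j$, which I would extract from the truncation $N_{\ge k}$: since $N$ is $k$-regular, $N_{\ge k}$ is generated in degree $k$ with a $k$-linear resolution, and its Hilbert series is precisely $ct^k+\sum_j g_jt^k/(1-t)^{d-j}$. Establishing the positivity of the $g_j$ — equivalently, that the Hilbert function of a $k$-regular module is a genuinely nonnegative combination of the shifted terms $t^k/(1-t)^{d-j}$ — is the step I expect to be the main obstacle, since it is where the interplay between regularity and the partial-fraction shape of the Hilbert series is genuinely used. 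Together the realization (upper bound) and this necessity argument (with the corner computation of the second paragraph recording sharpness) yield the equivalence asserted in (1) and tie it to (2).
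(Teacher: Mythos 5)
Your proposal is correct in two of its three pieces, but note first that the paper itself does not prove this statement at all --- it is quoted verbatim from \cite{BMU}, Corollary~4.2 (and the inequality in item~(1) must be read as $\Hreg H\le k$, a typo which your argument silently and correctly repairs, consistent with how the paper later uses the result in Theorem~\ref{th}). What you do get right is genuinely nice: the dictionary $t^i/(1-t)^m \leftrightarrow K[X_1,\dots,X_m](-i)$ gives the constructive direction (nonnegative $(0,k)$-presentation $\Rightarrow$ $\Hreg\le k$); the identification of the unique presentation's coefficients, $f_i=H_M(i)$ for $i<k$, the $g_j$ as the coefficients of the Hilbert polynomial $P_M$ in the binomial basis centred at $k$, and $c=H_M(k)-P_M(k)$, is correct; and the Grothendieck--Serre formula $c=\sum_{i\ge 0}(-1)^i\dim_K H^i_{\fm}(N)_k$, valid for \emph{every} $N$ with $H_N=H_M$, cleanly proves part~(2) and also $c\ge 0$ whenever $\reg N\le k$. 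That corner argument is a self-contained and arguably more transparent route to (2) than the bookkeeping in \cite{BMU}.

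The genuine gap sits exactly where you predict it: the nonnegativity of the tail coefficients $g_j$ when some $N$ with $H_N=H_M$ satisfies $\reg N\le k$. This is not a residual verification but the actual content of the hard half of (1), and it does not follow formally from $N_{\ge k}$ having a $k$-linear resolution: writing $H_{N_{\ge k}}(t)=\bigl(\sum_i(-1)^i\beta_i t^{k+i}\bigr)/(1-t)^n$, the quantities you need to be nonnegative are the alternating sums $b_m=\sum_i(-1)^{i-m}\binom{i}{m}\beta_i$, and alternating sums of Betti numbers carry no sign for formal reasons. Two standard ways to close this: (a) induct on dimension via a filter-regular linear form $x$, using the exact sequence $0\to(0:_N x)(-1)\to N(-1)\to N\to N/xN\to 0$, so $H_N(1-t)=H_{N/xN}-tH_{(0:_Nx)}$, together with the fact that $\reg N\le k$ confines $(0:_N x)\subseteq H^0_{\fm}(N)$ to degrees $\le k$, so that its contribution is absorbed by the corner and the $f_i$ strip --- an induction of this kind is essentially how the implication is established in \cite{BMU}; or (b) replace $N$ by a revlex generic initial module (same Hilbert series, regularity not decreased beyond $k$) and use the Eliahou--Kervaire resolution of strongly stable modules, for which $\sum_i\beta_i t^i=\sum_u(1+t)^{\max(u)-1}$ and hence $b_m$ is simply the number of monomial generators $u$ with $\max(u)=m+1$, visibly nonnegative. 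Until one of these (or an equivalent) is carried out, the ``only if'' half of (1) is asserted rather than proved, so the proposal as written establishes (2) and one implication of (1) only.
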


A different definition of Hilbert regularity was given by Herzog \cite{H}, but the two definitions are equivalent, as follows from the description in terms of boundary presentations.
 
The first goal of this note is to estimate the Hilbert regularity of the Stanley-
Reisner ring $K[\bigtriangleup]$ of a simplicial complex $\bigtriangleup$ in terms of its $f$-vector and $h$-vector. Then, we compute the Hilbert regularity of  boundary complexes of simplicial polytopes  and, more generally, of Gorenstein algebras. 

For the notions of commutative algebra we refer the reader to Bruns and Herzog \cite{BH} and Stanley \cite{S}.

\section{Hilbert regularity and $f$-vector }

The term ``boundary presentation'' of \cite{BMU} is motivated by a visualization of a decomposition of a Hilbert series: A decomposition
\begin{equation*}
	\dfrac{Q(t)}{(1-t)^d}=\sum_{i=0}^d\sum_{j\geq 0} a_{ij}\dfrac{t^j}{(1-t)^i}
\end{equation*}
can be depicted on a square grid with the box at position $(i,j)$ labeled by $a_{ij}$. 

In the case of an $(n,k)$-presentation, the nonzero labels in this grid form the bottom and the right edge of a rectangle with $d-n+1$ rows and $k+1$ columns. The coefficient in the ``corner'' plays a double role since it belongs to both edges, therefore it is denoted by an extra letter. In the case $n=0$, the only one relevant for us, one obatins the figure
\newlength\savewidth
\newcolumntype{I}{!{\vrule width 1pt}}
\newcolumntype{I}{!{\vrule width 1pt}}
\newcommand{\whline}{\noalign{\global\savewidth\arrayrulewidth%
\global\arrayrulewidth1pt}%
\hline %
\noalign{\global\arrayrulewidth\savewidth}}	
\begin{center}
\begin{tabular}{cIc|c|c|cIc|c}
$(1-t)^{-d}$& & &  & $g_0$ &   \\
\hline $\vdots$ & & & & $\vdots$&  \\
\hline $(1-t)^{-1}$ & &  &&$g_{d-1}$&  \\
\hline $1$ &$f_0$ &$\cdots$ & $f_{k-1}$ &$c$ && \\ 
\whline  & $1$ & $\cdots$ & $t^{k-1}$ & $t^k$& $t^{k+1}$&$\cdots$
\end{tabular} 
\end{center}

In order to obtain a boundary presentation, one must usually ``move'' entries of this grid that are not yet at the desired position. The basic rule for moving terms right and down is  the relation
\begin{equation}
\frac{t^u}{(1-t)^v}=\dfrac{t^{u+1}}{(1-t)^{v}}+\frac{t^{u}}{(1-t)^{v-1}};\label{right}
\end{equation}
Rewriting it, we get the rule for moves to the left and diagonally down:
\begin{equation}
\frac{t^u}{(1-t)^v}=\dfrac{-t^{u-1}}{(1-t)^{v-1}}+\frac{t^{u-1}}{(1-t)^v}.\label{left}
\end{equation}

Clearly $(0,k)$-presentations are additive for linear combinations of the rational functions $\dfrac{t^u}{(1-t)^v}$. Therefore we must understand their boundary presentations, of which we will make extensive use in the rest of the paper. 

\begin{lemma}\label{lright}
Let $k\geq u$. Then the $(0,k)$-presentation of the rational function  $\dfrac{t^u}{(1-t)^v}$ is given by 
\begin{equation}
\dfrac{t^u}{(1-t)^v}=\sum_{n=0}^{k-u-1}\binom{n+v-1}{v-1}t^{u+n}+\sum_{n=1}^v \binom{k-u+v-n-1}{v-n}\dfrac{t^k}{(1-t)^n}.\label{right-k}
\end{equation}

\end{lemma}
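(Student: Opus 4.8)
The plan is to verify the identity (\ref{right-k}) directly and then invoke uniqueness. Since an $(n,k)$-presentation, when it exists, is uniquely determined (\cite{BMU}, Cor.\ 3.4), it suffices to check two things: first, that the right-hand side of (\ref{right-k}) has the \emph{shape} of a $(0,k)$-presentation in the sense of Definition \ref{boundary}; and second, that it genuinely equals $t^u/(1-t)^v$ as a rational function. For the shape one reads off $d=v$, the polynomial part $\sum_{n=0}^{k-u-1}\binom{n+v-1}{v-1}t^{u+n}$ as the row-$0$ entries $f_i$ (with $f_i=0$ for $i<u$), the corner coefficient $c=0$, and the column-$k$ entries $g_j$ from the second sum after the reindexing $n\mapsto v-j$. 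In particular the presentation is automatically corner-free, which matches the expectation for a single basis function $t^u/(1-t)^v$ with $v\ge 1$.

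For the equality I would compare power series. Expanding $t^u/(1-t)^v=\sum_{m\ge 0}\binom{m+v-1}{v-1}t^{u+m}$, the terms of degree $<k$ are exactly the first sum of (\ref{right-k}). It then remains to show that the tail $\sum_{m\ge k-u}\binom{m+v-1}{v-1}t^{u+m}$ coincides with the boundary part $\sum_{n=1}^{v}\binom{k-u+v-n-1}{v-n}\,t^k/(1-t)^n$. Expanding each $t^k/(1-t)^n=\sum_{p\ge 0}\binom{p+n-1}{n-1}t^{k+p}$ and matching the coefficient of $t^{k+p}$ on both sides reduces the claim to the single binomial identity
\begin{equation*}
\binom{a+p+v-1}{v-1}=\sum_{n=1}^{v}\binom{a+v-n-1}{v-n}\binom{p+n-1}{n-1},\qquad a:=k-u\ge 0,\ p\ge 0.
\end{equation*}

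This identity is the heart of the proof, and it is a Vandermonde-type convolution: its right-hand side is the coefficient of $t^{v-1}$ in the product $(1-t)^{-a}(1-t)^{-(p+1)}=(1-t)^{-(a+p+1)}$, whose $t^{v-1}$-coefficient is precisely $\binom{a+p+v-1}{v-1}$. I expect the only delicate point to be the bookkeeping in setting up this reduction—keeping track of the index shifts between $m$, $p$ and $n$; the convolution itself is then immediate.

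An alternative, more in the spirit of the moving rules just introduced, is induction on $k\ge u$ using (\ref{right}). The base case $k=u$ is immediate: every binomial $\binom{v-n-1}{v-n}$ in the second sum vanishes except at $n=v$, where it equals $1$, returning $t^u/(1-t)^v$ itself. In the inductive step one pushes the column-$k$ entries into column $k+1$ by applying (\ref{right}) row by row from the top down; the total that cascades into each row, and the amount deposited as the new polynomial term $t^k$, are then summed by the hockey-stick identity $\sum_{i=0}^{N}\binom{a+i-1}{i}=\binom{a+N}{N}$, reproducing the coefficients of the $(0,k+1)$-presentation with $a=k-u$. This route avoids the Vandermonde convolution at the cost of tracking the cascade, so I would favour the direct computation above.
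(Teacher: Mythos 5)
Your proposal is correct, but your preferred route differs from the paper's. The paper proves the lemma by induction on $k$: starting from the tautological $(0,u)$-presentation, it pushes the column-$(k-1)$ entries into column $k$ via the move-right rule \eqref{right} and sums the cascading contributions with the hockey-stick identity $\sum_{m=n}^v \binom{k-u+v-m-2}{v-m}=\binom{k-u+v-n-1}{v-n}$ --- this is exactly the ``alternative'' you sketch at the end and then set aside. Your main argument instead verifies the closed formula in one shot: you check that the right-hand side of \eqref{right-k} has the shape of a corner-free $(0,k)$-presentation with $d=v$, prove the rational-function identity by matching power-series coefficients (which reduces, after the substitution $a=k-u$, to the Vandermonde convolution $\binom{a+p+v-1}{v-1}=\sum_{n=1}^{v}\binom{a+v-n-1}{v-n}\binom{p+n-1}{n-1}$, i.e.\ the coefficient of $t^{v-1}$ in $(1-t)^{-a}\cdot(1-t)^{-(p+1)}$), and then invokes the uniqueness of $(n,k)$-presentations from \cite{BMU}, Corollary 3.4. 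Both arguments are sound and of comparable length; yours buys a self-contained verification that does not depend on having guessed the formula via the recursion (the identity is simply confirmed), at the price of leaning on the uniqueness statement and on one nontrivial convolution identity, whereas the paper's induction explains where the binomial coefficients come from and stays entirely within the ``moving rules'' formalism it has just introduced. The only caveats, both minor, are that the formula (and either proof) tacitly assumes $v\ge 1$, and that you should note $k\ge u\ge u-v=\deg(t^u/(1-t)^v)$ so that the $(0,k)$-presentation you are comparing against is indeed the unique one guaranteed to exist.
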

\begin{proof}
We use induction on $k$. For $k=u$ the equation is a tautology. So let $k>u$. 

By the induction hypothesis the $(0,k-1)$-presentation of $t^u/(1-t)^v$ is given by
\begin{equation*}
\dfrac{t^u}{(1-t)^v} =\sum_{n=0}^{k-u-2}\binom{n+v-1}{v-1}t^{u+n}+\sum_{n=1}^v \binom{k-u+v-n-2}{v-n}\dfrac{t^{k-1}}{(1-t)^n}.
\end{equation*}

The coefficients of the terms $t^{u+n}$, $0\le n\le k-2$, in the $(0,k)$-presentation are the same as those in the $(0,k-1)$-presentation. By equation \eqref{right} it is clear that the coefficient of $t^{k-1}$ in the $(0,k)$-presentation is the sum of the coefficients of the terms $t^{k-1}/(1-t)^n$ in the $(0,k-1)$-presentation:
$$
\sum_{n=1}^v \binom{k-u+v-n-2}{v-n}=\binom{k-u+v-2}{v-1}
$$
by a standard identity of binomial coefficients. Similarly, the coefficient of $t^k/(1-t)^n$ is
\begin{equation*}
\sum_{m=n}^v \binom{k-u+v-m-2}{v-m}=\binom{k-u+v-n-1}{v-n}.\qedhere
\end{equation*}
\end{proof}

\begin{lemma}\label{lleft}
Let $ u-v\leq k\leq u$. Then the $(0,k)$-presentation for the rational function  $\dfrac{t^u}{(1-t)^v}$ is given by 
\begin{equation}
\dfrac{t^u}{(1-t)^v}=\sum_{n=v+k-u}^{v}(-1)^{v-n}\binom{u-k}{u-k-(v-n)}\dfrac{t^k}{(1-t)^{n}}.
\end{equation}
\end{lemma}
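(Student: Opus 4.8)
The plan is to prove the identity directly by a binomial expansion rather than by the induction used for Lemma \ref{lright}. The point is that for $k\le u$ the whole presentation lives in the single column $t^k$, so no moves ``to the right'' via \eqref{right} are needed, and the left-moves \eqref{left} can be resummed in closed form.

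First I would factor $t^u=t^k\,t^{u-k}$, which is legitimate because $k\le u$ forces $u-k\ge 0$, and expand the numerator about $t=1$ by writing $t=1-(1-t)$. The binomial theorem gives
\[
t^{u-k}=\sum_{i=0}^{u-k}(-1)^i\binom{u-k}{i}(1-t)^i,
\]
so that dividing by $(1-t)^v$ yields
\[
\dfrac{t^u}{(1-t)^v}=\sum_{i=0}^{u-k}(-1)^i\binom{u-k}{i}\dfrac{t^k}{(1-t)^{v-i}}.
\]
Next I would reindex by $n=v-i$. As $i$ runs from $0$ to $u-k$, the exponent $n$ runs from $v$ down to $v+k-u$, and the hypothesis $u-v\le k$ guarantees $v+k-u\ge 0$, so every denominator $(1-t)^n$ that occurs has $0\le n\le v$. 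The substitution gives
\[
\dfrac{t^u}{(1-t)^v}=\sum_{n=v+k-u}^{v}(-1)^{v-n}\binom{u-k}{v-n}\dfrac{t^k}{(1-t)^{n}},
\]
and the symmetry $\binom{u-k}{v-n}=\binom{u-k}{(u-k)-(v-n)}$, valid throughout the range because $0\le v-n\le u-k$ there, rewrites the coefficient exactly into the form stated in the lemma.

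Finally I would observe that the right-hand side is already a decomposition of $\dfrac{t^u}{(1-t)^v}$ of precisely the shape of a $(0,k)$-presentation: every term is a multiple of $t^k$, no power $t^i$ with $i<k$ occurs, and the denominators $(1-t)^n$ range only over $0\le n\le v$ (so the pole order $v$ plays the role of the dimension $d$, and the term $n=0$, which appears exactly when $k=u-v$, is the corner $ct^k$). Since a $(0,k)$-presentation exists for $k\ge \deg H=u-v$ and is uniquely determined by \cite[Cor. 3.4]{BMU}, the displayed identity must be that presentation.

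I expect no serious obstacle here; the computation is essentially bookkeeping. The only points needing care are the boundary terms $n=v$ and $n=v+k-u$, where one checks, using the convention $\binom{N}{j}=0$ for $j\notin[0,N]$, that the formula degenerates to the value $1$ correctly, and the case $k=u-v$ where the lowest denominator collapses to the corner. As a cross-check I would also note the alternative route, parallel to Lemma \ref{lright}: a downward induction on $k$ starting from the tautological case $k=u$, applying the left-move \eqref{left} to each term of the $(0,k+1)$-presentation and collapsing the two resulting contributions to $t^k/(1-t)^n$ by Pascal's rule; this reproduces the same coefficients and confirms the closed form.
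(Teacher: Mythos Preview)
Your proof is correct, and it takes a different route from the paper's. The paper proceeds by decreasing induction on $k$: the base case $k=u$ is the tautology, and the inductive step applies the left-move \eqref{left} to each term of the $(0,k+1)$-presentation and collapses the two contributions to $t^k/(1-t)^n$ via Pascal's rule---exactly the alternative route you describe at the end as a cross-check. Your primary argument, expanding $t^{u-k}=(1-(1-t))^{u-k}$ by the binomial theorem and dividing through by $(1-t)^v$, is more direct: it produces the closed form in one stroke and explains structurally why only the column $t^k$ appears. The paper's inductive approach has the virtue of parallelism with the proof of Lemma~\ref{lright}, while your approach is shorter and makes the uniqueness appeal to \cite[Cor.~3.4]{BMU} essentially unnecessary, since the identity of rational functions is already established outright.
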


\begin{proof}
We use decreasing induction to proof the result. For $k=u$ we have nothing to prove. For $k<u$ we get 
\begin{multline*}
(-1)^{v-n}\binom{u-(k+1)}{u-(k+1)-(v-n)}-(-1)^{v-(n+1)}\binom{u-(k+1)}{u-(k+1)-(v-(n+1))}\\
=(-1)^{v-n}\binom{u-k}{u-k-(v-n)}
\end{multline*}
by the induction hypothesis and \eqref{left}.\end{proof}

Now we recall the definition of the Stanley-Reisner ring associated with a simplicial complex and study the Hilbert regularity of this ring.
\begin{definition}(\cite{BH}, Definition 5.1.2)
Let $\bigtriangleup$ be a simplicial complex on the vertex set $V$. The Stanley-Reisner ring of the complex $\bigtriangleup$ is the $K$-algebra
$$K[\bigtriangleup]=K[X_1,\cdots,X_n]/I_{\bigtriangleup},$$
where $I_{\bigtriangleup}$ is the ideal generated by all monomials $X_{i_1}X_{i_2}\cdots X_{i_s}$ such that $\{v_{i_1},\cdots v_{i_s}\} \notin
\bigtriangleup$.
\end{definition}

Let $\bigtriangleup$ be a simplicial complex with $f$-vector $(f_0, f_1,\cdots,f_{d-1})$. Then the Hilbert series of $K[\bigtriangleup]$ is
\begin{equation}
H_{K[\bigtriangleup]}(t)=\sum_{i=-1}^{d-1}\dfrac{f_it^{i+1}}{(1-t)^{i+1}}.\label{fH}
\end{equation}

This formula has the grid presentation
\begin{center}
\begin{tabular}{cIc|c|c|c|c}
$(1-t)^{-d}$& & & &$\cdots$ & $f_{d-1}$\\
\hline $\vdots$ & & & & $\dots$ \\
\hline $(1-t)^{-2}$& &  & $f_1 $ &\\
\hline $(1-t)^{-1}$ & & $f_0$ & & \\
\hline $1$ & $1$&  & & \\ 
\whline  & $1$ & $t$ & $t^2$ & $\cdots$& $t^{d}$ 
\end{tabular} 
\end{center}

The goal is to push the entries of the table to the left as long as one keeps nonnegative entires. The basic tool is equation \eqref{left}. If we apply it successively to the table above, pushing the terms in the columns $t^d,t^{d-1},\dots,t^{k+1}$ into the column of $t^{k}$, we obtain new entries $a^{(k)}_i$ for $i=k,\cdots, d$ in the column $t^k$. They are given by the following theorem.

\begin{theorem}\label{th}
Let $\bigtriangleup$ be a $(d-1)$-dimensional simplicial complex with $f$-vector $(f_0, f_1,\cdots, f_{d-1})$. Then $\Hreg(K[\bigtriangleup])\leq k$ if and only if $a_i^{(k)}\geq 0$ for all $i=k,\dots, d,$ where
\begin{equation}
a_i^{(k)}=\sum_{n=i}^{d}(-1)^{n-i} \displaystyle{n-k\choose i-k}f_{n-1}.
\end{equation}

\end{theorem}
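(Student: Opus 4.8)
The plan is to produce the unique $(0,k)$-presentation of $H_{K[\bigtriangleup]}(t)$ explicitly and to show that the numbers $a_i^{(k)}$, $i=k,\dots,d$, are the only entries of it that can be negative. Since a $(0,k)$-presentation is uniquely determined (\cite{BMU}, Cor.\ 3.4), a \emph{nonnegative} $(0,k)$-presentation exists if and only if this unique one is nonnegative; by the characterization of \cite{BMU} recalled in the introduction, that is equivalent to $\Hreg(K[\bigtriangleup])\le k$. Thus once the reduction ``the presentation is nonnegative $\iff a_i^{(k)}\ge 0$ for $i=k,\dots,d$'' is in place, the theorem follows.

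First I would rewrite \eqref{fH} as $H_{K[\bigtriangleup]}(t)=\sum_{j=0}^{d} f_{j-1}\,t^j/(1-t)^j$ with the convention $f_{-1}=1$, so that every summand is a pure power $t^j/(1-t)^j$ (the case $u=v=j$). Because $(0,k)$-presentations are additive, it suffices to push each summand into column $t^k$ and add. For the summands with $j\ge k$ I would invoke Lemma \ref{lleft} with $u=v=j$: here the lower summation index equals $v+k-u=k$ and $u-k-(v-n)=n-k$, giving
\[
\frac{t^j}{(1-t)^j}=\sum_{n=k}^{j}(-1)^{j-n}\binom{j-k}{n-k}\frac{t^k}{(1-t)^n}.
\]
Multiplying by $f_{j-1}$, summing over $j=i,\dots,d$, and reading off the coefficient of $t^k/(1-t)^i$ yields exactly $a_i^{(k)}=\sum_{n=i}^{d}(-1)^{n-i}\binom{n-k}{i-k}f_{n-1}$, which is the asserted formula. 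These contributions occupy only the rows $i$ with $k\le i\le d$ of column $t^k$.

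Next I would treat the summands with $j<k$, which after the leftward moves still sit to the left of column $t^k$. For these I would apply Lemma \ref{lright} (equation \eqref{right-k}) with $u=v=j\le k$, writing $f_{j-1}t^j/(1-t)^j$ as a polynomial supported in columns $t^j,\dots,t^{k-1}$ of row $0$ together with terms $\binom{k-n-1}{j-n}f_{j-1}\,t^k/(1-t)^n$ in the rows $1\le n\le j\le k-1$ of column $t^k$. Every coefficient here is a binomial coefficient times $f_{j-1}\ge 0$, hence nonnegative. Crucially these rightward moves fill only rows $0,\dots,k-1$; they contribute nothing to column $t^k$ in rows $i\ge k$ and nothing to the corner $t^k/(1-t)^0$. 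Together with the $j=0$ summand (the term $f_{-1}=1$, a polynomial term in column $t^0$ of row $0$ when $k\ge 1$), these assemble with the $a_i^{(k)}$ into an array whose support is precisely that of a $(0,k)$-presentation; for $k\ge1$ the corner is $0$, while for $k=0$ there are no summands with $j<k$ and $a_0^{(0)}$ itself is the corner.

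Finally, by uniqueness of the $(0,k)$-presentation the array just constructed \emph{is} the $(0,k)$-presentation of $H_{K[\bigtriangleup]}(t)$ (and the construction shows it exists). Since every entry other than the $a_i^{(k)}$, $i=k,\dots,d$, is a nonnegative integer by the previous paragraph, this presentation is nonnegative if and only if $a_i^{(k)}\ge 0$ for all $i=k,\dots,d$; invoking \cite{BMU} then gives $\Hreg(K[\bigtriangleup])\le k$ exactly under this condition. I expect the main obstacle to be the bookkeeping of the last two paragraphs: one must verify that the leftward moves (for $j\ge k$) land only in rows $\ge k$ and the rightward moves (for $j<k$) only in rows $<k$, so that no cancellation can turn an a priori nonnegative auxiliary entry negative, and that the combined support is indeed the row-$0$/column-$t^k$ pattern required of a $(0,k)$-presentation.
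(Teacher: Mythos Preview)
Your proposal is correct and follows essentially the same approach as the paper's proof: apply Lemma~\ref{lleft} to the summands $f_{n-1}t^n/(1-t)^n$ with $n\ge k$ to obtain the $a_i^{(k)}$ in rows $k,\dots,d$ of column $t^k$, observe that the summands with $n<k$ contribute only nonnegative entries (in rows $<k$) via Lemma~\ref{lright}, and then invoke \cite{BMU}, Corollaries~3.4 and~4.2. The paper states this more tersely, whereas you spell out the disjointness of the row ranges and the corner behavior at $k=0$ versus $k\ge 1$; this added bookkeeping is exactly what the paper leaves implicit.
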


\begin{proof}
Note that, by Lemma \ref{lleft}, $\dfrac{f_{n-1}t^n}{(1-t)^n}$ contributes $(-1)^{n-i} \displaystyle{n-k\choose i-k}f_{n-1}$ for $i=k,\cdots ,n$. 
Therefore $a_i^{(k)}=\sum_{n=i}^{d}(-1)^{n-i} \displaystyle{n-k\choose i-k}f_{n-1}$.

On the other hand, the terms $\dfrac{f_{i-1}t^i}{(1-t)^i}$, $i=0,\dots,k-1$, contribute only nonnegative coefficients to the other entries in the $(0,k)$-presentation. Therefore the theorem follows from \cite{BMU}, Corollary 3.4 and Corollary 4.2.
\end{proof}

\begin{example}
Let $\Delta$ be the boundary of the tetrahedron. Then its Hilbert series is $H(t)=1+\frac{4t}{(1-t)}+\frac{6t^2}{(1-t)^2}+\frac{4t^3}{(1-t)^3}$ . Its $(0,2)$-presentation is
\begin{equation*}
	H(t)=1+4t+\frac{4t^2}{(1-t)}+\frac{2t^2}{(1-t)^2}+\frac{4t^2}{(1-t)^3},
\end{equation*}
and the Hilbert regularity is $2$.
\end{example}

In general, the Hilbert regularity is a lower bound for the Costelnuovo-Mumford regularity, therefore the previous theorem has an immediate consequence:

\begin{corollary}
	Let $\bigtriangleup$ be a $(d-1)$-dimensional simplicial complex with $f$-vector $(f_0, f_1,\cdots,\allowbreak f_{d-1})$. If $\reg(K[\bigtriangleup])\leq k$, then $a_i^{(k)}\geq 0$ for all $k\leq i\leq d,$ where $a_i^{(k)}$ are the coefficients in Theorem \ref{th}.
\end{corollary}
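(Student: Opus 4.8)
The plan is to deduce this corollary directly from Theorem~\ref{th} together with the general principle, stated just above the corollary, that Hilbert regularity bounds Castelnuovo--Mumford regularity from below. The key observation is that $K[\bigtriangleup]$ is itself one of the modules competing in the minimization that defines $\Hreg$, so no new computation is required; the whole argument is a short formal chaining of facts already in hand.

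First I would unwind the definition of Hilbert regularity from \cite{BMU}: $\Hreg(H_{K[\bigtriangleup]})$ is the least value of $\reg(N)$ taken over all finitely generated graded modules $N$ with $H_N(t)=H_{K[\bigtriangleup]}(t)$. Taking $N=K[\bigtriangleup]$ shows that this Stanley--Reisner ring participates in the minimum, whence $\Hreg(K[\bigtriangleup])\le \reg(K[\bigtriangleup])$.

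Next, under the hypothesis $\reg(K[\bigtriangleup])\le k$, this inequality yields $\Hreg(K[\bigtriangleup])\le k$. At this stage I would simply invoke the ``only if'' implication of Theorem~\ref{th}, which translates the bound $\Hreg(K[\bigtriangleup])\le k$ into the sign conditions $a_i^{(k)}\ge 0$ for all $k\le i\le d$, exactly as claimed.

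I do not expect a genuine obstacle here, since the statement is a formal consequence of results already established. The only point requiring any care is the justification of the inequality $\Hreg\le \reg$; but this is immediate from the very definition of Hilbert regularity as an infimum over Hilbert-series-preserving modules, and it is precisely the assertion made in the sentence preceding the corollary.
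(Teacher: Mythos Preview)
Your proposal is correct and follows exactly the paper's own reasoning: the corollary is obtained by combining the inequality $\Hreg(K[\bigtriangleup])\le \reg(K[\bigtriangleup])$ (stated just before the corollary) with the ``only if'' direction of Theorem~\ref{th}. Your added justification of this inequality from the infimum definition of Hilbert regularity is a welcome elaboration but introduces nothing new.
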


\begin{example}
It is easy to see that $\Hreg(K[\Delta])=0$ if and only if $\Delta$ is the full simplex so that $\Hreg(K[\Delta])=0$ if and only if $\reg(K[\Delta])=0$. In general however,  $\Hreg(K[\Delta])<\reg(K[\Delta])$. If $\Delta$ is the graph consisting of the $3$ edges of a triangle, then $\reg(K[\Delta])=2$, but $\Hreg(K[\Delta])=1$.

More generally, let $G$ be a connected graph with at least $2$ edges. Then $K[G]$ is Cohen-Macaulay, and $\reg(K[G])\le 2$. Let $r=\reg(K[G])$ and $h=\Hreg(K[G])$. Then (i) $r=1$ and $h=1$ if $G$ is a tree, (ii) $r=2$ and $h=1$ if $G$ is a cycle, (iii) $r=2$ and $h=2$ in all other cases.
\end{example}

\begin{remark}\label{sq}
Theorem \ref{th} only requires that the Hilbert series of $R$ can be written in the form of equation \eqref{fH} with nonnegative coefficients $f_i$. A typical class of such examples are the \emph{squarefree modules} over a polynomial ring introduced by Yanagawa in \cite{Y}. A Stanley-Reisner ring $K[\bigtriangleup]$ is a squarefree $R$-module. We recall the definition.

Let $R=K[x_1,\cdots, x_n]$ be a polynomial ring. Consider the natural ${\mathbb{N}}^n$-grading on $R$. For ${\bf a}=(a_1,\cdots, a_n)\in {\mathbb{Z}^n}$, we set $\Supp( {\bf a}):=\{i \  | \ a_i\neq 0\}\subset [n]=\{1,\cdots,n\}$ and ${|\bf a|}=a_1+\cdots +a_n$. A finitely generated $\mathbb{N}^n$-graded $R$-module $M$ is called \emph{squarefree} if the multiplication map
$$
\times x_i :M_{\bf a} \rightarrow M_{{\bf a}+{\bf e}_i}
$$
is bijective for all ${\bf a}\in \mathbb{N}^n
$ and all $i\in \Supp(\bf a)$.

 If $M$ is squarefree, then $\dim_K M_{\bf a}=\dim_K M_{\Supp({\bf a})}$ for all ${\bf a} \in \mathbb{N}^n$, and $M$ is generated by its squarefree part $\{M_{\sigma}\ |\ \sigma\subset [n]\}$. Moreover, the Krull dimension of $M$ is $\max \{|\sigma| \ \ | M_{\sigma}\neq 0\}$.
 
 Let $M$ be a squarefree module of dimension $d$. The $\mathbb{Z}^n$-graded Hilbert series of $M$ is 
\begin{equation*}
H_M(t_1,\cdots t_n)=\sum_{\sigma\subseteq [n]}\dim_K M_\sigma\prod_{i\in \sigma} \dfrac{t_i}{1-t_i}
\end{equation*}
and it specializes to the $\mathbb{Z}$-graded Hilbert series
\begin{equation*}
H_M(t)=\sum_{\sigma\subseteq [n]}{\dim_K M_\sigma } \dfrac{t^{|\sigma|}}{(1-t)^{|\sigma|}}.
\end{equation*}  
Set $f_i=\sum\limits_{\sigma\subseteq [n],|\sigma|=i} \dim_KM_{\sigma}$, where $f_\emptyset (M)=h_\emptyset(M)=\dim _KM_{\bf 0}$.
Then 
\begin{equation}
H_M(t)=\sum_{i=0}^d \dfrac{f_{i-1} t^i}{(1-t)^i}
\label{sqh}
\end{equation}
Via this presentation of the Hilbert series one can generalize Theorem \ref{th} to squarefree modules.

Note that the Hilbert series that are given by  \eqref{sqh} with nonnegative coefficients $f_i$ are exactly the Hilbert series of the squarefree modules
 \begin{equation*}
 M=\bigoplus_{i=0}^d \bigoplus_{j=1}^{f_{i-1}}(x_1\cdots x_i) K[x_{1},\cdots,x_i].
 \end{equation*} 

\end{remark}

\section{Hilbert regularity and $h$-vector}
A  graded module $M\neq 0$ of dimension $d$ has a Hilbert series of the form $H_M(t)=\dfrac{Q_M(t)}{(1-t)^d}$ where $Q_M(t)$ is polynomial with integer coefficients. Therefore we can write 
\begin{equation*}
H_{M}(t)=\dfrac{h_0+h_1t+\cdots+h_st^s}{(1-t)^d}, \quad h_s\neq 0.
\end{equation*}
Recall that we assume $M$ is finitely generated over a standard graded $K$-algebra and is generated in nonnegative degrees. 
The finite sequence of integers $h(M)=(h_0,h_1,\cdots,h_s)$ is called the {\it h-vector} of $M$. The $h$-vector of a simplicial complex is the $h$-vector of its Stanley-Reisner ring. In the following proposition we give a $(0,k)$-presentation of the Hilbert series in terms of the $h$-vector of $M$.

Before we formulate the proposition let us have a look at the grid presentation in terms of the $h$-vector. The entries $*$ mark the squares for the $(0,k)$-presentation:
\begin{center}
	\begin{tabular}{cIc|c|c|cIc|c|c}
		$(1-t)^{-d}$& $h_0$ & $\cdots$& $h_{k-1}$ & $h_k$ & $h_{k+1}$ & $\cdots$ & $h_s$\\
		\hline $\vdots$ & & & &$\vdots$&&&  \\
		\hline $(1-t)^{-1}$ & &  & &*&&&  \\
		\hline $1$ & *&$\cdots$  &*&*&&& \\ 
		\whline  & $1$ & $\cdots$ & $t^{k-1}$ & $t^k$& $t^{k+1}$ &$\cdots$ & $t^s$
	\end{tabular} 
\end{center}
It is clear that the entries in the columns  $0,\dots,k-1$ must be pushed successively to the right until we have entries only in row $0$ and column $k$. Those in columns $k+1,\dots,s$ must be successively pushed to the left until all terms have accumulated in column $k$. The hypothesis on $k$ in the proposition ensures that we do not reach row $0$ before column $k$.
 
\begin{proposition}\label{pro}
Let $h(M)=(h_0,h_1,\cdots,h_s)$ be the $h$-vector of the graded module $M$. If $k\geq s-d$, then the  $(0,k)$-presentation exists and is given by
\begin{equation*}
H_M(t)=\sum_{i=0}^{k-1}x_i^{(k)}t^i+\sum_{j=1}^{d}\dfrac{y_j^{(k)}t^k}{(1-t)^j}+\sum_{r=k+d-s}^{d}\dfrac{z_r^{(k)}t^k}{(1-t)^{r}}
\end{equation*}
with the coefficients
\begin{align*}
x_i^{(k)}=&\sum_{n=0}^{i}\binom{d+i-n-1}{d-1}h_n, & for\ \ i=0,\cdots,k-1,\\
y_j^{(k)}=&\sum_{n=0}^{k-1}\binom{k-n+d-j-1}{d-j}h_n, & for \ \ j=1,\cdots,d,\\
z_r^{(k)}=&(-1)^{d-r}\sum_{n=0}^{r+s-d-k}\binom{s-k-n}{d-r}h_{s-n},& for\ \  r=k+d-s,\cdots,d.
\end{align*}
\end{proposition}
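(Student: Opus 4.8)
The plan is to obtain the presentation term by term from the numerator, applying the two elementary presentation lemmas to each summand and then collecting coefficients, justified by the additivity of $(0,k)$-presentations noted just before Lemma \ref{lright} and by their uniqueness (\cite{BMU}, Corollary 3.4). I would start from
\[
H_M(t)=\sum_{n=0}^{s}\frac{h_n t^n}{(1-t)^d}=\sum_{n=0}^{k-1}\frac{h_n t^n}{(1-t)^d}+\sum_{n=k}^{s}\frac{h_n t^n}{(1-t)^d},
\]
splitting the numerator at the target column $k$, and treat the two blocks separately.

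For the left block $n=0,\dots,k-1$, since $n<k$ Lemma \ref{lright} applies with $u=n$, $v=d$ and rewrites $\frac{h_n t^n}{(1-t)^d}$ as a sum of monomials $\binom{d+i-n-1}{d-1}h_n\,t^i$ for $n\le i\le k-1$ together with column-$k$ terms $\binom{k-n+d-j-1}{d-j}h_n\,\frac{t^k}{(1-t)^j}$ for $j=1,\dots,d$. Summing the monomial contributions over $n=0,\dots,i$ reproduces $x_i^{(k)}$, and summing the column-$k$ contributions over $n=0,\dots,k-1$ reproduces $y_j^{(k)}$; these are immediate.

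For the right block $n=k,\dots,s$, the hypothesis $k\ge s-d$ is exactly what makes Lemma \ref{lleft} applicable to every term: its condition $u-v\le k\le u$ reads $n-d\le k\le n$, and $n\le s\le k+d$ holds throughout. The lemma rewrites $\frac{h_n t^n}{(1-t)^d}=\sum_{r=d+k-n}^{d}(-1)^{d-r}\binom{n-k}{d-r}h_n\frac{t^k}{(1-t)^r}$, using $\binom{n-k}{n-k-(d-r)}=\binom{n-k}{d-r}$; the degenerate case $n=k$ simply returns $h_k\,t^k/(1-t)^d$ unchanged and so lands in the $r=d$ slot. Collecting the coefficient of $\frac{t^k}{(1-t)^r}$ over all right-block terms that reach row $r$ gives $(-1)^{d-r}\sum_{n=d+k-r}^{s}\binom{n-k}{d-r}h_n$, and the substitution $n\mapsto s-n$ puts this into the stated form of $z_r^{(k)}$ over the range $r=k+d-s,\dots,d$. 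The lower endpoint $r=k+d-s$ is the corner $c\,t^k$, which is present exactly when $k=s-d$ and absent (corner-free) when $k>s-d$.

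Assembling the three families, the monomials form the $t^i$-part while the two kinds of column-$k$ contributions add to give coefficient $y_r^{(k)}+z_r^{(k)}$ of each $\frac{t^k}{(1-t)^r}$; since the outcome already has the shape of a $(0,k)$-presentation (row-$0$ entries only in columns $0,\dots,k-1$ plus the possible corner, and higher rows only in column $k$), it must be \emph{the} presentation by uniqueness in \cite{BMU}, Corollary 3.4, while existence is guaranteed by $k\ge s-d=\deg H$. I expect the $x_i$ and $y_j$ computations to be routine consequences of Lemma \ref{lright}; the only delicate points are the bookkeeping in the right block — confirming that $k\ge s-d$ really covers all terms under Lemma \ref{lleft}, carrying out the reindexing $n\mapsto s-n$ with matching summation bounds, and verifying that the $n=k$ term is counted once in $z_d^{(k)}$ and not double-counted against the $y$-family. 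This range-matching and reindexing in $z_r^{(k)}$ is the main obstacle.
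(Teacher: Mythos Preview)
Your proposal is correct and follows exactly the paper's approach: split the numerator at column $k$, apply Lemma \ref{lright} to the terms with $n<k$ and Lemma \ref{lleft} to those with $n\ge k$, then collect coefficients. The paper's own proof is just this two-line sketch, and your expanded bookkeeping (the reindexing $n\mapsto s-n$ for $z_r^{(k)}$, the verification that $k\ge s-d$ makes Lemma \ref{lleft} applicable throughout, and the handling of the $n=k$ term) fills in precisely the details the paper omits.
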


\begin{proof}
Note that 
\begin{equation*}
H_M(t)=\dfrac{h_0+h_1t+\cdots+h_st^s}{(1-t)^d}=\sum_{n=0}^{k-1}\dfrac{h_nt^n}{(1-t)^d}+\sum_{n=k}^{s}\dfrac{h_nt^n}{(1-t)^d}.
\end{equation*}
Apply Lemma \ref{lright} to the first summand and Lemma \ref{lleft} to the second one.
\end{proof}

The $f$-vector and the $h$-vector of a $(d-1)$-dimensional simplicial complex $\bigtriangleup$ are related by
\begin{equation*}
\sum_ih_it^i=\sum_{i=0}^d
f_{i-1}t^i(1-t)^{d-i}.
\end{equation*}
In particular, the $h$-vector has length at most $d$, and, for $j=0,\cdots,d,$
\begin{equation}
h_j=\sum_{i=0}^{j}(-1)^{j-i}\displaystyle{d-i\choose j-i}f_{i-1}
\quad \text{and}\quad f_{j-1}=\sum_{i=0}^j\displaystyle{d-i\choose j-i}h_i.
\label{hf}
\end{equation}

The following corollary is a consequence of Proposition \ref{pro} and Theorem \ref{th}.

\begin{corollary}
Let $\bigtriangleup$ be a $(d-1)$-dimensional simplicial complex with the $h$-vector $(h_0, h_1,\cdots, h_s)$, and set $h_{s+1}=\cdots =h_d=0$. Then $\Hreg(K[\bigtriangleup])\leq k$ if and only if
\begin{equation}
\sum_{n=0}^{k-1} \binom{k-n+d-i-1}{d-i}h_n+(-1)^{d-i}\sum_{n=0}^{i+s-d-k} \binom{s-k-n}{d-i}h_{s-n}\ge 0
\label{hv}
\end{equation}
for $i=k+d-s\dots,d$ .
\end{corollary}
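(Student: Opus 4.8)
The plan is to read the regularity bound off a single row of the unique $(0,k)$-presentation, using Theorem~\ref{th} to control the sign of the ``automatic'' entries and Proposition~\ref{pro} to express the remaining entries through the $h$-vector. First I would check that the presentation is available: since $\bigtriangleup$ is $(d-1)$-dimensional its $h$-vector has length $s\le d$, so $k\ge 0\ge s-d$ and Proposition~\ref{pro} applies; moreover this $(0,k)$-presentation is unique by \cite{BMU}, Corollary~3.4, and by \cite[Cor.~4.2]{BMU} one has $\Hreg(K[\bigtriangleup])\le k$ exactly when every coefficient of this presentation is nonnegative.

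Next I would split the coefficients into those that are automatically nonnegative and those that carry information. Working with the $f$-vector form of Theorem~\ref{th}, the entries in row $0$ and the column-$k$ entries in the rows below $k$ arise only from moving the terms $f_{i-1}t^i/(1-t)^i$ with $i<k$ to the right, hence are nonnegative because the face numbers $f_{i-1}$ are; the only genuine constraints are $a_i^{(k)}\ge 0$ for $i=k,\dots,d$. Since the presentation is unique, the column-$k$ coefficient $a_i^{(k)}$ must coincide with the coefficient of $t^k/(1-t)^i$ produced from the $h$-vector in Proposition~\ref{pro}, namely $y_i^{(k)}+z_i^{(k)}$, and this sum is precisely the left-hand side of \eqref{hv}. (One may instead verify $a_i^{(k)}=y_i^{(k)}+z_i^{(k)}$ by hand, substituting the conversion formulas \eqref{hf} and collapsing the resulting double binomial sum by Vandermonde's identity, but invoking uniqueness bypasses this.) At this stage one already obtains that $\Hreg(K[\bigtriangleup])\le k$ if and only if \eqref{hv} holds for every $i=k,\dots,d$.

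The remaining, and in my view principal, difficulty is to shrink the range to $i=k+d-s,\dots,d$. For $i<k+d-s$ the upper limit $i+s-d-k$ of the $z$-sum is negative, so $z_i^{(k)}=0$ and \eqref{hv} degenerates to $y_i^{(k)}=\sum_{n=0}^{k-1}\binom{k-n+d-i-1}{d-i}h_n\ge 0$; equivalently, these are exactly the rows that the leftward moves of $h_k,\dots,h_s$ never reach. One must show that these inequalities are forced once $h_{s+1}=\dots=h_d=0$, and this is not formal, since $y_i^{(k)}$ is a positive combination of the $h_n$, which may individually be negative. The route I would pursue is to return to the $f$-vector, where $a_i^{(k)}=\sum_{n=i}^d(-1)^{n-i}\binom{n-k}{i-k}f_{n-1}$, and to use that $i<k+d-s$ means $d-i>s-k$, so that the vanishing of the top $h$-entries annihilates every term of the numerator of degree exceeding $s$; tracking these cancellations should express each low row through the higher, already-tested ones together with nonnegative face numbers, whence $a_i^{(k)}\ge 0$ automatically. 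Should this cancellation argument resist a clean closed form, the safe fallback is to state the criterion over the full range $i=k,\dots,d$, which is nothing but Theorem~\ref{th} transported to the $h$-vector.
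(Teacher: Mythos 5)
Your second paragraph is, in essence, the paper's entire proof: the authors simply note that $a_i^{(k)}=y_i^{(k)}+z_i^{(k)}$ for $i=k+d-s,\dots,d$ and invoke Theorem \ref{th}, which is exactly the identification you obtain from the uniqueness of the $(0,k)$-presentation together with Proposition \ref{pro}. So up to that point your argument coincides with the paper's.

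The difficulty you isolate in your final paragraph is genuine, and the paper does not address it: Theorem \ref{th} requires $a_i^{(k)}\ge 0$ for \emph{all} $i=k,\dots,d$, whereas \eqref{hv} is imposed only for $i=k+d-s,\dots,d$, so the rows $k\le i<k+d-s$ (a nonempty range precisely when $s<d$, i.e.\ $h_d=0$) must be shown to be harmless. For those rows $z_i^{(k)}$ is absent and $a_i^{(k)}=y_i^{(k)}=\sum_{n=0}^{k-1}\binom{k-n+d-i-1}{d-i}h_n$; this is nonnegative whenever $h_0,\dots,h_{k-1}\ge 0$ (e.g.\ in the Cohen--Macaulay case, and likewise in the Gorenstein situation of Corollary \ref{Gor}), but for a general complex some $h_n$ may be negative, and neither you nor the paper supplies an argument. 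Your sketched route is not yet a proof: the inequality that would have to be established is $\sum_{n=i}^{d}(-1)^{n-i}\binom{n-k}{i-k}f_{n-1}\ge 0$ for $k\le i<k+d-s$, and this does not follow formally from $f_{j-1}\ge 0$ alone; one needs at least the sharper bounds $f_{j-1}\ge\binom{d}{j}$ coming from the presence of a $(d-1)$-face, and even then the cancellation has to be carried out. Your fallback --- stating the criterion over the full range $i=k,\dots,d$, which is just Theorem \ref{th} transported through \eqref{hf} --- is the only version that is actually proved, by you or by the paper; you were right to flag the restricted range as the unproved step rather than to assert it.
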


\begin{proof}
Note that $a_i^{(k)}=y_i^{(k)}+z_i^{(k)}$ for $i=k+d-s,\cdots, d$ and apply Theorem \ref{th}.
\end{proof}

By formula \eqref{hf} one can conversely derive Theorem \ref{th} from the above corollary.

We can give a very simple description of the Hilbert regularity of a Gorenstein algebra $A$. Let $H_A(t)=(1-t)^{-d}(h_0+h_1t+\cdots+h_st^s)$. If $A$ is Gorenstein, then $h_i\ge 0$ and $h_{s-i}=h_i$ for all $i=0,\dots,s$. 

\begin{corollary}\label{Gor}
Let $(h_0,\cdots, h_s)$  be the $h$-vector of the Gorenstein algebra $A$.
\begin{itemize}
\item[(i)]
If $s<2d$, then $\Hreg (A)= \lceil\frac{s}{2}\rceil$; 
\item[(ii)]
if $ s\geq 2d$, then
\begin{equation*}
\Hreg (A)=
\begin{cases}
s-d,& d\text{ even}, \\
s-(d-1),& d\text{ odd}. 
\end{cases}
\end{equation*}  
\end{itemize}
\end{corollary}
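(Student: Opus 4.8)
The plan is to translate everything into the boundary-presentation language of \cite{BMU}, using that $\Hreg(A)\le k$ holds if and only if the (unique) $(0,k)$-presentation of $H_A(t)$ exists, equivalently $k\ge s-d$, and is nonnegative. Producing such a presentation amounts to ``moving'' all the mass of $H_A=h(t)/(1-t)^d$ into column $k$. First I would record the qualitative behaviour of the moves: by Lemma \ref{lright} the terms coming from $h_n t^n/(1-t)^d$ with $n<k$ (moves to the right) contribute only to the polynomial coefficients $x_i$ $(i<k)$ and to the entries $t^k/(1-t)^r$ with $r\ge 1$, always with nonnegative sign. Hence the $x_i$ are automatically nonnegative and play no role, and $\Hreg(A)\le k$ is governed by just two things: the tail coefficients $c_r$ of $t^k/(1-t)^r$ for $r=1,\dots,d$, and the corner coefficient $c$ of $t^k$ in row $0$.

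Next I would compute these two ingredients. For the corner, Lemma \ref{lleft} shows that a left-move of $h_n t^n/(1-t)^d$ reaches row $0$ only when $d+k-n=0$; since $n\le s$ and $k\ge s-d$, this occurs only for $k=s-d$ and $n=s$, contributing $(-1)^d h_s=(-1)^d$ (recall $h_0=h_s=1$). Thus the corner equals $(-1)^d$ when $k=s-d$ and equals $0$ when $k>s-d$; already this forces $\Hreg(A)\ge s-d+1$ when $d$ is odd. For the tail I would encode $c_r=C_{d-r}^{(k)}$, where $C_p^{(k)}=[w^p]\phi_k(w)$ with $\phi_k(w)=h(1-w)/(1-w)^k$ and $w=1-t$, and then exploit palindromicity, $h(1-w)=(1-w)^s h(1/(1-w))$, to rewrite $\phi_k(w)=(1-w)^{s-k}h(1/(1-w))$.

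For part (i), at $k=\lceil s/2\rceil$ this rewriting gives $\phi_k(w)=\sum_n h_n(1-w)^{\lfloor s/2\rfloor-n}$. Pairing the index $n$ with $s-n$ and invoking the elementary facts that, for integers $j,a\ge 0$, both $(1-w)^{j}+(1-w)^{-j}$ and $(1-w)^{a}+(1-w)^{-a-1}$ have nonnegative power-series coefficients, I conclude $C_p^{(k)}\ge 0$ for all $p$; since here $k>s-d$ the corner is $0$, so $\Hreg(A)\le\lceil s/2\rceil$. The matching lower bound comes from the single coefficient $c_{d-1}=C_1^{(k)}$: using symmetry ($h'(1)=\tfrac{s}{2}h(1)$) one gets $C_1^{(k)}=(k-\tfrac{s}{2})\,h(1)$, which is negative for $k<\lceil s/2\rceil$. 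Hence $\Hreg(A)\ge\lceil s/2\rceil$ whenever $d\ge 2$, and the remaining small cases $d\le 1$ are checked by hand.

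For part (ii), $s\ge 2d$ gives $s-d\ge\lceil s/2\rceil$, so the binding lower bound is now existence, $k\ge s-d$. At $k=s-d$ the rewriting yields $\phi_{s-d}(w)=\sum_n h_n(1-w)^{d-n}$, whence $C_p^{(s-d)}=(-1)^p\sum_n\binom{d-n}{p}h_n$; for even $p$ this is visibly nonnegative, and for odd $p$ I would split off the terms with $n>d$, use $h_n=h_{s-n}$, and compare the resulting two sums termwise via $\binom{s-n-d+p-1}{p}\ge\binom{d-n}{p}$ (valid because $s\ge 2d$), concluding that all tail coefficients are nonnegative at $k=s-d$. The parity of $d$ then enters only through the corner $(-1)^d$: for $d$ even the corner is $1$, so $\Hreg(A)=s-d$; for $d$ odd the corner is $-1$, so $\Hreg(A)\ge s-d+1$, while at $k=s-d+1$ the tail coefficients are the partial sums (in $p$) of the nonnegative numbers $C_p^{(s-d)}$ and the corner is $0$, giving $\Hreg(A)=s-d+1$. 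I expect the main obstacle to be exactly the tail-nonnegativity at the critical $k$: the raw binomial sums alternate in sign, and the crux is the symmetry rewriting $\phi_k(w)=(1-w)^{s-k}h(1/(1-w))$, which converts them into manifestly nonnegative combinations in (i) and into a clean termwise binomial comparison in (ii). A secondary but essential subtlety is isolating the corner from the $c_r$ (left-moves reach row $0$ only when $k=s-d$), since it is precisely the corner that produces the parity dichotomy in (ii).
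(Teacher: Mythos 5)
Your proof is correct, and its skeleton coincides with the paper's: upper bounds by verifying nonnegativity of the $(0,k)$-presentation at the critical $k$, lower bounds from the negative coefficient of $t^{k-1}/(1-t)^{d-1}$ in case (i), and, in case (ii), from non-existence of a $(0,s-d-1)$-presentation together with the corner $(-1)^dh_s<0$ for odd $d$. What differs is the bookkeeping for the crucial nonnegativity step: the paper manipulates the explicit binomial sums $y_j^{(k)}+z_j^{(k)}$ of Proposition \ref{pro} and pairs $h_n$ with $h_{s-n}$ by hand, whereas you encode the tail coefficients as power-series coefficients of $h(1-w)/(1-w)^k$ with $w=1-t$ and exploit the functional equation $h(1-w)=(1-w)^sh(1/(1-w))$, which turns the same pairing into the transparent facts that $(1-w)^j+(1-w)^{-j}$ and $(1-w)^a+(1-w)^{-a-1}$ have nonnegative coefficients, and turns the lower-bound coefficient into the clean identity $C_1^{(k)}=(k-\tfrac{s}{2})h(1)$. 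This buys you two things the paper glosses over: an actual verification of nonnegativity at $k=s-d$ (resp.\ $s-d+1$) in case (ii), which the paper dismisses with ``by the same argument as in the first part'', and an explicit acknowledgement of the degenerate cases $d\le 1$ in case (i), where $c_{d-1}$ is no longer a genuine tail entry. The paper's version is more self-contained relative to Proposition \ref{pro}; yours is easier to check and to generalize to other palindromic $h$-vectors.
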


\begin{proof}
Let $s<2d$. Then $k=\lceil\dfrac{s}{2}\rceil>s-d$, and there exists a corner-free $(0,k)$-presentation for $H_A(t)$. With the notation of Proposition \ref{pro}, we show that $y_j+z_{j}$ is nonnegative for $j=d+k-s,\dots, d$. By Proposition \ref{pro} 
\begin{equation*}
y_j^{(k)}+z_{j}^{(k)}=\sum_{n=0}^{k-1}\binom{k-n+d-j-1}{d-j}h_n+(-1)^{d-j}\sum_{n=0}^{j+s-d-k}\binom{s-k-n}{d-j}h_{s-n},
\end{equation*}
and by the symmetry of the $h$-vector, 
\begin{align*}
y_j^{(k)}+z_{j}^{(k)}&=\sum_{n=0}^{j+s-d-k}\biggl(\binom{k-n+d-j-1}{d-j}+(-1)^{d-j}\binom{s-k-n}{d-j}\biggr)h_{n}\\
&\qquad +\sum_{n=j+s-d-k+1}^{k-1}\binom{k-n+d-j-1}{d-j}h_n.
\end{align*}
Set $i=j+\lfloor\frac{s}{2}\rfloor -d$, then
\begin{align*}
y_i^{(k)}+z_{i}^{(k)}&= \sum_{n=0}^{i}\biggl(\binom{s-n-i-1}{\lfloor\frac{s}{2}\rfloor -i}+(-1)^{\lfloor\frac{s}{2}\rfloor -i}\binom{\lfloor\frac{s}{2}\rfloor -n}{\lfloor\frac{s}{2}\rfloor -i}\biggr)h_{n}\\
&\qquad +  \sum_{n=i+1}^{\lceil\frac{s}{2}\rceil -1}\binom{s-n-i-1}{\lfloor\frac{s}{2}\rfloor -i}h_n.
\end{align*}
The coefficients in the first summand of the above equation are nonnegative for $i=0,\cdots, \lfloor\frac{s}{2}\rfloor$. Therefore $\Hreg(A)\leq \lceil\frac{s}{2}\rceil$.
On the other hand 
\begin{align*}
y_{d-1}^{(k-1)}+z_{d-1}^{(k-1)}&=
\sum_{n=0}^{k-2}{(k-n-1)}h_n-\sum_{n=0}^{s-k}(s-k-n+1)h_{s-n}\\
&=(2k-s-2)\sum_{n=0}^{k-2}h_n-\sum_{n=k-1}^{s-k}(s-k-n+1)h_{s-n}\\
&=(2k-s-2)\sum_{n=0}^{s-k}h_n-\sum_{n=k-2}^{s-k}(s-k-n+1)h_{s-n}
\end{align*}
is negative for $k=\lceil\frac{s}{2}\rceil$. Therefore $\Hreg(A)\geq \lceil\frac{s}{2}\rceil $, and the equality holds.

Let $s\geq 2d$; then by the same argument as in the first part we can see that $\Hreg (A)\leq s-d$ for an even number $d$ and $\Hreg (A)\leq s-(d-1)$ for an odd number $d$. On the other hand, by Lemma 3.8 of \cite{BMU}, there does not exist a $(0, s-d-1)$-presentation for $H_A(t)$. Thus $\Hreg (A)\geq s-d$. For odd $d$ the coefficient $z_0=(-1)^dh_s<0$ in the corner of the $(0,s-d)$-presentation is negative. Therefore  $\Hreg (A)\geq s-(d-1)$ in this case.
\end{proof}

Corollary \ref{Gor} only uses the equation $h_i=h_{s-i}$ for $i=0,\dots,s$ and the nonnegativity of the $h_i$. 

Noteworthy examples of Gorenstein rings are the Stanley-Reisner rings of boundary complexes of simplicial polytopes. Their $h$-vector has length $d$, and since they are Cohen-Macaulay, they have Castelnuovo-Mumford regularity $d$.

\begin{corollary}
Let $\bigtriangleup _{\mathcal{P}}$ be the boundary complex of the simplicial $d$-polytope $\mathcal{P}$ with $h$-vector $(h_0, h_1,\cdots, h_d)$. Then $\Hreg(K[\bigtriangleup _{\mathcal{P}}])= \lceil\frac{d}{2}\rceil$.
\end{corollary}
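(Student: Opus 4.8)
The plan is to deduce this directly from Corollary \ref{Gor}, since the Stanley-Reisner ring $A=K[\bigtriangleup_{\mathcal{P}}]$ of the boundary complex of a simplicial $d$-polytope is Gorenstein. First I would recall why $A$ is Gorenstein: the boundary complex of a simplicial polytope is a homology $(d-1)$-sphere, and by Reisner's criterion (equivalently, Stanley's characterization) the Stanley-Reisner ring of a homology sphere is Gorenstein. This is precisely the fact already invoked in the paragraph preceding the statement. Hence the $h$-vector $(h_0,\dots,h_d)$ is nonnegative and symmetric, $h_i=h_{d-i}$, and these are exactly the two properties that drive Corollary \ref{Gor}.

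Next I would pin down the parameter $s$ of Corollary \ref{Gor}, namely the degree of the $h$-polynomial. By the Dehn-Sommerville relations $h_d=h_0=1\neq 0$, so the $h$-polynomial has degree exactly $s=d$; in particular the $h$-vector attains the full length anticipated in the remark before the statement. With $s=d$ recorded, I would check which branch of Corollary \ref{Gor} applies. Since $d\geq 1$ we have $s=d<2d$, so we land in case (i), which gives $\Hreg(A)=\lceil\frac{s}{2}\rceil=\lceil\frac{d}{2}\rceil$, the desired equality.

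Because the whole argument is a substitution into Corollary \ref{Gor}, I do not expect a genuine obstacle. The only points demanding care are the two flagged above: confirming that $A$ is Gorenstein, so that the symmetry and nonnegativity hypotheses of Corollary \ref{Gor} are legitimately in force, and confirming that $s=d$ rather than a smaller value, so that the case $s<2d$ is the relevant branch and the answer is $\lceil\frac{d}{2}\rceil$ instead of a value coming from part (ii).
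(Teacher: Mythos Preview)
Your proposal is correct and matches the paper's intended argument: the paper presents this corollary without proof, immediately after remarking that boundary complexes of simplicial polytopes are Gorenstein with $h$-vector of length $d$, so the result is meant to follow directly from Corollary~\ref{Gor}(i) with $s=d<2d$. Your care in verifying $h_d=h_0=1$ (so that $s=d$ exactly) and in checking which branch of Corollary~\ref{Gor} applies is precisely what is needed.
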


\begin{acknowledgement}
The second author would like to thank the Department of Mathematics of the University of Osnabr\"uck  for hospitality and partial support during the time that she was spending her sabbatical semester in this university. Her work was also partially supported by a grant from the Simons Foundation. The authors thank the referee for his carefully reading.  
\end{acknowledgement}


\end{document}